\documentclass[11pt]{article}
\usepackage[utf8]{inputenc}
\usepackage[english]{babel}
\usepackage{amsmath}
\usepackage{amsthm}
\usepackage{amssymb}
\usepackage{enumitem}
\usepackage{authblk}

\newtheorem{theorem}{Theorem}[section]

\newtheorem{corollary}[theorem]{Corollary}
\newtheorem{lemma}[theorem]{Lemma}
\newtheorem{proposition}[theorem]{Proposition}

\theoremstyle{definition}
\newtheorem{definition}[theorem]{Definition}

\theoremstyle{remark} \theoremstyle{remark}
\newtheorem{remark}[theorem]{Remark}
\newtheorem{example}[theorem]{Example}

\numberwithin{equation}{section}

\newcommand{\N}{\mathbb{N}}
\newcommand{\R}{\mathbb{R}}
\newcommand{\C}{\mathbb{C}}
\newcommand{\X}{\mathfrak{X}}
\newcommand{\f}{\varphi}

\begin{document}
	\title{\textbf{A Frankel type theorem for generic submanifolds of Sasakian manifolds}}
	\author{Dario Di Pinto and Antonio Lotta}

\maketitle

\begin{abstract}
	We introduce a weaker notion of generic submanifold of a Sasakian manifold and we prove a Frankel type theorem for this kind of submanifolds under suitable hypotesis on the index of the scalar Levi forms determined by normal directions. It concerns the intersection between a generic and an invariant submanifold and the intersection between two generic submanifolds. From this theorem we derive some topological information about generic submanifolds of Sasakian space forms.
\end{abstract}
\medskip

\textbf{\small Key words}: intersection of submanifolds $\cdot$ generic submanifold of a Sasakian manifold $\cdot$ scalar Levi form

\textbf{\small Mathematics Subject Classification (2010)}:  53C25, 53C40.

\section{Introduction}
In this paper we deal with {generic submanifolds} of a Sasakian manifold, and we
establish a sufficient condition for two of them to have non empty intersection,
following Frankel's classical approach, which goes back to \cite{Frankel}. 
We shall also consider the case when one of the submanifolds  is generic and 
the other one is invariant. In \cite{Ornea} and \cite{Pitis} this kind of results were 
discussed for the case of two invariant submanifolds.

The definition of  generic submanifold we shall adopt here is weaker than 
the standard one, see for instance \cite{Yano-Kon2} and \cite{Yano-Kon}. 
Let $(M,\f,\xi,\eta,g)$ be a Sasakian manifold. A submanifold $N$ of $M$ 
will be called {\em generic} provided:

\medskip
a) the Reeb vector field $\xi$ is nowhere normal to $N$;

b) $\f(TN^\perp)\subset TN$, where $TN^\perp$ is the normal bundle of $N$.

\medskip
We remark that in the literature it is customary to  assume $N$  tangent to $\xi$, 
so our assumption is less stringent.

\medskip
Our approach will be based on the fact that such a submanifold is naturally endowed with a $CR$ structure 
$(HN,J)$ of codimension $p+1$,
where $p=\dim(M)-\dim(N)$ (see Prop. \ref{generic_CR_srtucture}). Like the canonical $CR$ structure of the ambient manifold $M$, this induced structure is also strongly pseudoconvex, namely the scalar Levi form $\mathfrak{L}_\eta$ determined by the restriction to $N$ of the contact form $\eta$ is, up to a constat factor, the restriction 
to $HN$ of the Riemannian  metric $g$.  
We refer the reader to \S\ref{prelim} for the definition of the 
(scalar) Levi forms of a $CR$ manifold; here we just recall that each of them
is a Hermitian symmetric bilinear form on the holomorphic 
tangent space $H_xN$ at a point $x\in N$,  intrinsically attached 
to a cotangent vector $\omega\in T_xN^*$ annihilating $H_xN$.
Observe that a) ensures that $\eta$ is everywhere non vanishing on our
submanifold.

We shall focus instead on the Levi forms $\mathfrak{L}_\nu$ determined by
the normal directions $\nu$ to $N$; by definition, 
$\mathfrak{L}_\nu$ is attached to the covector:
$$\omega(X)=g(\varphi\nu,X).$$
Each of these Levi forms will be called {\em characteristic};
we shall denote its index by $i(\mathfrak{L}_\nu)$ and by $n(\mathfrak{L}_\nu)$ its
nullity. 

\bigskip
With this terminology, our main result is the following.

\begin{theorem}\label{main_theorem}
	Let $(M,\f,\xi,\eta,g)$ be a connected, complete  Sasakian manifold with non negative
	$\f$-bisectional curvature. Let $N$ and $P$ submanifolds of $M$, and assume one of them closed and the other compact. 
	
	\medskip
	a) If $N$ is generic and $P$ is invariant, we have 
	$$N\cap P\not=0,$$
	provided for each characteristic Levi form of $N$ it holds:
	\begin{equation}\label{hp(a)}
		i(\mathfrak{L}_\nu)\ge\dim(M)-\dim(P).
	\end{equation}
	
	b) If both $N$ and $P$ are generic submanifolds, set 
	\begin{equation}\label{q,s}
		q:=\min_{\nu\in TN^\perp}i(\mathfrak{L}_\nu) \quad\text{and}\quad s:=\min_{\nu'\in TP^\perp}(i(\mathfrak{L}_{\nu'})+n(\mathfrak{L}_{\nu'})).
	\end{equation}
	Then we have
	$$N\cap P\not=0,$$
	provided $q>0$, $s>0$ and 
	\begin{equation}\label{hp(b)}
		q+s\ge\dim(M)-1.
	\end{equation}
\end{theorem}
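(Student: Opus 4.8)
The plan is to follow Frankel's geodesic-and-second-variation method. Suppose for contradiction that $N\cap P=\emptyset$. Since one submanifold is closed and the other compact, and $M$ is complete, there exists a minimizing geodesic $\gamma:[0,\ell]\to M$ realizing the distance between $N$ and $P$, meeting both orthogonally: $\gamma(0)\in N$, $\gamma(\ell)\in P$, with $\dot\gamma(0)\in T_{\gamma(0)}N^\perp$ and $\dot\gamma(\ell)\in T_{\gamma(\ell)}P^\perp$. The idea is to produce a sufficiently large family of variation vector fields along $\gamma$, each tangent to both submanifolds at the endpoints, whose second variation of arc length is strictly negative; this contradicts minimality of $\gamma$.

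Let me think about where the CR/Sasakian structure enters. The natural fields to use are $V(t)=$ parallel transports combined with the $\varphi$-image of $\dot\gamma$. The second variation formula for each variation field $V$ with $V(0)\in T N$, $V(\ell)\in TP$ reads
\begin{equation}
I(V,V)=\int_0^\ell\bigl(|\nabla_{\dot\gamma}V|^2-g(R(V,\dot\gamma)\dot\gamma,V)\bigr)\,dt-g(\nabla_V V,\dot\gamma)\Big|_0^\ell,
\end{equation}
where the boundary term involves the second fundamental forms of $N$ and $P$. First I would establish the link between the hypotheses on the characteristic Levi forms and these boundary terms: the key geometric point should be that, for a generic submanifold, the characteristic Levi form $\mathfrak{L}_\nu$ in the direction $\nu=\dot\gamma$ controls the second fundamental form via the Sasakian structure equations (for instance $\nabla_X\xi=-\varphi X$ and the relation $(\nabla_X\varphi)Y=g(X,Y)\xi-\eta(Y)X$). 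I expect that on a subspace of $T N$ where $\mathfrak{L}_{\dot\gamma}$ is negative (respectively negative or null), the corresponding boundary contribution has a favorable sign.

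The core of the argument is a dimension count producing a nonzero variation field $V$ with $I(V,V)<0$. Using parallel transport $\tau$ along $\gamma$, I would consider the subspaces of $T_{\gamma(0)}N$ on which $\mathfrak{L}_{\dot\gamma(0)}$ is negative-definite (dimension $\ge i(\mathfrak{L}_{\dot\gamma(0)})\ge q$) and the preimage under $\tau$ of the subspace of $T_{\gamma(\ell)}P$ on which $\mathfrak{L}_{\dot\gamma(\ell)}$ is negative or null (dimension $\ge i+n\ge s$). Inside $T_{\gamma(0)}M$, which has dimension $\dim(M)$, two subspaces of dimensions $\ge q$ and $\ge s$ with $q+s\ge\dim(M)-1$ must intersect nontrivially once I account for the one dimension spanned by $\dot\gamma(0)$ itself (which is orthogonal to both $HN$ and $HP$ and must be excluded); this is exactly why the bound reads $\dim(M)-1$ in part (b) and $\dim(M)-\dim(P)$ in part (a), the invariant case giving extra room because all of $TP$ (not just a Levi-negative part) is available. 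Parallel-transporting a nonzero common vector yields $V$ with $\nabla_{\dot\gamma}V=0$, so $|\nabla_{\dot\gamma}V|^2=0$; nonnegative $\varphi$-bisectional curvature makes the curvature integral nonnegative hence $-\int g(R(V,\dot\gamma)\dot\gamma,V)\le 0$; and the sign conditions on the Levi forms make the boundary term $\le 0$, with strict negativity somewhere forced by $q>0,s>0$.

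The main obstacle will be the translation step: converting the abstract sign of the characteristic Levi form $\mathfrak{L}_\nu$ into a usable inequality for the boundary term $g(\nabla_V V,\dot\gamma)$, i.e.\ for the second fundamental form $h(V,V)$ paired with $\dot\gamma$. This requires carefully unwinding the definition $\omega(X)=g(\varphi\nu,X)$ and the Levi form formula from \S\ref{prelim}, and checking that $\varphi$ maps the relevant Levi-negative subspace of $HN$ into $TN$ (using property (b) of generic submanifolds) so that $V$ can legitimately be chosen tangent to $N$ at $t=0$ and to $P$ at $t=\ell$. A secondary subtlety is ensuring that the curvature term genuinely benefits from nonnegative $\varphi$-bisectional curvature, which means $V$ and $\dot\gamma$ should be arranged so that $g(R(V,\dot\gamma)\dot\gamma,V)$ is a $\varphi$-bisectional-type expression; this may dictate using $V=\varphi\dot\gamma$-adapted frames and verifying that parallel transport preserves the relevant orthogonality to $\xi$ and $\dot\gamma$ along $\gamma$.
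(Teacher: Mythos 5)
Your outline follows the same Frankel-type strategy as the paper (minimizing geodesic, second variation, Levi forms controlling the boundary terms, a dimension count producing an admissible parallel field), but two of your steps, as written, do not close, and they are precisely the points where the Sasakian structure has to do real work. First, the final estimate cannot be run on a single parallel field $V$. Nonnegative $\f$-bisectional curvature does \emph{not} give $R(V,\dot\gamma,V,\dot\gamma)\ge 0$ for one field, and the characteristic Levi form is not $g(\alpha(V,V),\nu)$: by Proposition \ref{scalar-Levi-form/second-fund.form} one has $\mathfrak{L}_\nu(X,X)=g(\alpha(X,X)+\alpha(\f X,\f X),\nu)$, so the hypotheses only control the \emph{sum} of the boundary terms of the pair $E,\f E$, and likewise only the sum $R(E,\dot\gamma,E,\dot\gamma)+R(\f E,\dot\gamma,\f E,\dot\gamma)$ is nonnegative (and even this needs a separate argument, Proposition \ref{Prop:curvature}, since $\dot\gamma$ need not be orthogonal to $\xi$). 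One must therefore estimate $I(E,E)+I(\f E,\f E)$ and conclude that at least one summand is negative. For this to make sense, $\f E$ must itself be parallel along $\gamma$ and must land in $T_yP$; that is the content of Lemma \ref{Lemma:Etilde=fE}, proved by showing that $f=\eta(E)$ satisfies $f''+cf=0$ with $f(0)=f'(0)=0$. You flag this as a ``secondary subtlety,'' but without it the pair $\{E,\f E\}$ is simply not available, and your claims ``the curvature integral is nonnegative'' and ``the boundary term is $\le 0$'' are false for a single field.

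Second, your dimension count is short by one. Excluding only the direction of $\dot\gamma$ places the two subspaces (of dimensions $\ge q$ and $\ge s$) inside a space of dimension $\dim M-1$, and $q+s\ge\dim M-1$ then yields intersection dimension $\ge 0$, which does not produce a nonzero common vector. The missing dimension is $\xi_y$: the subspace chosen in $P$ lies in $\langle\xi_y\rangle^\perp$, and the parallel transport of a subspace of $H_xN$ also stays orthogonal to $\xi$ along $\gamma$ --- again a consequence of $\eta(E)\equiv 0$ from Lemma \ref{Lemma:Etilde=fE}, not of generalities about parallel transport, since $\xi$ is not parallel. Both subspaces therefore sit in $\langle\dot\gamma(l),\xi_y\rangle^\perp$, of dimension $\dim M-2$, giving intersection dimension $\ge q+s-\dim M+2\ge 1$. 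So the two ingredients you defer (parallelism of $\f E$ and orthogonality to $\xi$ along $\gamma$) are not refinements of your argument; they are what makes both the existence of the variation field and the negativity of the index form possible.
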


\medskip
As an application, we get the following results:

\begin{corollary}\label{cor.1}
	Let $(M,\f,\xi,\eta,g)$ be a complete, connected, regular Sasakian manifold with nonnegative $\f$-bisectional curvature and assume that $M$ fibers onto a K\"ahler manifold biholomorphic to a product $S\times\mathbb{C}$, where $S$ is a complex manifold. 
	
	Then every generic submanifold $N$ of $M$ whose characteristic Levi forms have all positive index is non compact.   
\end{corollary}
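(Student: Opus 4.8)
The plan is to argue by contradiction via part (a) of Theorem \ref{main_theorem}, using the $\mathbb{C}$-factor of the base to manufacture an invariant submanifold that is disjoint from $N$ yet is forced to meet it. First I would invoke regularity: the Reeb foliation gives a principal bundle $\pi\colon M\to B$ (Boothby--Wang) over the Kähler manifold $B$, with $\xi$ tangent to the fibres and $d\pi$ carrying the horizontal part of $\f$ to the complex structure $J$ of $B$. Identifying $B$ with $S\times\mathbb{C}$, for each $c\in\mathbb{C}$ I set $P_c:=\pi^{-1}(S\times\{c\})$. Since $S\times\{c\}$ is a complex (hence $J$-invariant) submanifold of $B$ of real codimension $2$, its preimage $P_c$ is a closed, complete submanifold of $M$ that is tangent to $\xi$ and satisfies $\f(TP_c)\subset TP_c$; that is, $P_c$ is invariant, with $\dim(M)-\dim(P_c)=2$.

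Next, assuming for contradiction that $N$ is compact, I compose $\pi$ with the projection $\mathrm{pr}\colon S\times\mathbb{C}\to\mathbb{C}$ to obtain $f:=\mathrm{pr}\circ\pi\colon M\to\mathbb{C}$, whose image $f(N)$ is compact. Choosing any $c_0\in\mathbb{C}\setminus f(N)$ then gives $N\cap P_{c_0}=\emptyset$. On the other hand, the hypotheses of Theorem \ref{main_theorem}(a) hold for the pair $(N,P_{c_0})$: $M$ is connected, complete and has nonnegative $\f$-bisectional curvature by assumption, $N$ is compact and generic, and $P_{c_0}$ is closed and invariant. It remains to check the index condition \eqref{hp(a)}, namely $i(\mathfrak{L}_\nu)\ge\dim(M)-\dim(P_{c_0})=2$ for every characteristic Levi form of $N$. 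The key observation is that each $\mathfrak{L}_\nu$ is a Hermitian symmetric form on the complex space $(H_xN,J)$, so its index---the real dimension of a maximal negative-definite subspace---is necessarily even; the standing hypothesis that all characteristic Levi forms have positive index therefore forces $i(\mathfrak{L}_\nu)\ge 2$, matching the codimension of $P_{c_0}$ exactly. Theorem \ref{main_theorem}(a) then yields $N\cap P_{c_0}\neq\emptyset$, contradicting the previous paragraph, so $N$ cannot be compact.

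The steps I expect to demand the most care are the construction of $P_c$ and the index bookkeeping. For the former I must confirm that lifting the $J$-invariant tangent spaces of $S\times\{c\}$ through the foliation does produce a globally defined invariant submanifold: closedness is automatic as a preimage, and completeness follows since $\pi$ is a Riemannian submersion with complete fibres over the complete base $B$. For the latter, the whole argument hinges on reading ``positive index'' of a Hermitian form as ``$\ge 2$''; this is precisely what makes the codimension-$2$ slices $P_c$ the correct test objects and lets the bare positivity assumption meet the quantitative bound \eqref{hp(a)}. The role of the product with $\mathbb{C}$ should be emphasised as twofold: its non-compactness supplies the escape direction guaranteeing disjointness, while the fact that a point of $\mathbb{C}$ has complex codimension one makes the slices $S\times\{c\}$ the thinnest available invariant family, so that the required index is as small as possible.
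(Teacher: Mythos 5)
Your proof is correct, but it takes a genuinely different route from the paper's. You apply part (a) of Theorem \ref{main_theorem} to the codimension-two \emph{invariant} slices $P_c=\pi^{-1}(S\times\{c\})$, using the non-compactness of the $\mathbb{C}$-factor to push a slice off $\pi(N)$ and the evenness of the index of a Hermitian form to upgrade ``positive index'' to $i(\mathfrak{L}_\nu)\ge 2=\dim M-\dim P_c$. The paper instead applies part (b): it takes $E\subset\mathbb{C}$ a real line disjoint from the projection of $\pi(N)$ and uses the \emph{generic} hypersurface $\pi^{-1}(S\times E)$, which is Levi-flat by the comparison formula \eqref{L=L'}, so that $s=\dim H_yP=\dim M-3$ and $q+s\ge 2+(\dim M-3)=\dim M-1$. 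Your argument is arguably the cleaner of the two: it runs exactly parallel to the paper's own proof of Corollary \ref{cor.2} (where $\pi^{-1}(\sigma)$ of a hyperplane plays the role of your $P_{c_0}$), and it avoids invoking the Riemannian-submersion identity \eqref{L=L'} altogether, needing only that a holomorphic submanifold of the base lifts to an invariant submanifold of $M$. What the paper's route buys in exchange is a genuine illustration of part (b) of the main theorem, which is otherwise unused, and it works with a hypersurface rather than a higher-codimension slice. All the details you flag as delicate (closedness of $P_{c_0}$ as a preimage, $\f$-invariance of the lift via $d\pi\circ\f=J\circ d\pi$, and the index bookkeeping) check out; the completeness remark is not needed, since the theorem only asks that one submanifold be closed and the other compact.
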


\medskip
\begin{corollary}\label{cor.2}
	Let $N$ be a generic submanifold of the Sasakian space form $\mathbb{S}^{2n+1}(c)$ with $\varphi$-sectional
	curvature $c$, $c>-3$. If all the characteristic Levi forms of $N$ 
	have positive index, then $N$ cannot be contained in an open hemisphere.
\end{corollary}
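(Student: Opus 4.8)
The plan is to deduce Corollary \ref{cor.2} from part (b) of Theorem \ref{main_theorem} by exhibiting a second generic submanifold $P$ whose intersection with $N$ is forced to be nonempty whenever $N$ lies in an open hemisphere. Since the sphere $\mathbb{S}^{2n+1}(c)$ with $c>-3$ is a complete, connected, simply connected Sasakian space form of dimension $2n+1$, it has constant $\f$-sectional curvature $c$, and one checks that the $\f$-bisectional curvature is nonnegative precisely because $c>-3$ (the relevant curvature expression in a Sasakian space form is an affine function of $c+3$). Thus the curvature hypothesis of the main theorem is satisfied. The key idea is to take for $P$ a totally geodesic invariant sphere $\mathbb{S}^{2n-1}(c)\subset\mathbb{S}^{2n+1}(c)$ sitting as the ``equatorial'' complement of an open hemisphere: if $N$ were contained in one open hemisphere, then $N$ would be disjoint from this equator, contradicting the forced intersection.

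First I would make precise the notion of open hemisphere and the associated equatorial submanifold. An open hemisphere is determined by a hyperplane through the origin in the ambient space; the boundary great sphere, realized as a Sasakian space form $\mathbb{S}^{2n-1}(c)$ of real codimension $2$, is an invariant submanifold of $\mathbb{S}^{2n+1}(c)$ (it is $\f$-invariant and tangent to $\xi$, hence generic in the classical sense, and in particular generic in our weaker sense). I would then observe that this equator $P$ has real dimension $2n-1 = \dim(M)-2$, and that its characteristic Levi forms are nondegenerate and definite, being totally geodesic and invariant in a strongly pseudoconvex ambient $CR$ structure; consequently $i(\mathfrak{L}_{\nu'})+n(\mathfrak{L}_{\nu'})$ is large. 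The heart of the matter is to verify that the numerical hypothesis \eqref{hp(b)} holds: with $\dim(M)=2n+1$ we need $q+s\ge 2n$, where $q=\min_\nu i(\mathfrak{L}_\nu)>0$ is controlled by the assumption that all characteristic Levi forms of $N$ have positive index.

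The main obstacle, and the step requiring genuine care, is the bookkeeping on the indices to confirm that $q+s\ge\dim(M)-1=2n$ together with $q>0$ and $s>0$, so that part (b) applies. For the invariant equator $P$ the holomorphic tangent space has complex dimension $n-1$, and because $P$ is totally geodesic the second fundamental form vanishes, forcing each characteristic Levi form $\mathfrak{L}_{\nu'}$ to coincide (up to scale) with a definite restriction of $g$; hence $i(\mathfrak{L}_{\nu'})+n(\mathfrak{L}_{\nu'})$ attains the full real dimension of $H_xP$, giving $s=2(n-1)$ or the appropriate value after accounting for the definiteness. Combined with $q\ge 1$ this must be checked to yield $q+s\ge 2n$; if the naive count falls one short, the positivity assumption $q>0$ on every characteristic Levi form of $N$ must be leveraged to push $q$ up to the needed threshold, or the complementary dimension of $P$ chosen so that the inequality is tight. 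Once \eqref{hp(b)} is secured, Theorem \ref{main_theorem}(b) gives $N\cap P\neq\emptyset$; since $P$ is the boundary equator disjoint from every open hemisphere, this contradicts the supposed containment of $N$ in an open hemisphere, completing the proof.
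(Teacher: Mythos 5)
There is a genuine gap, and it lies at the very foundation of your strategy: the submanifold $P$ you propose is \emph{invariant}, and an invariant submanifold of codimension $\ge 2$ is never generic in the sense of this paper. Indeed, if $\f(TP)\subset TP$ then for $\nu\in TP^\perp$ and $X\in TP$ one has $g(\f\nu,X)=-g(\nu,\f X)=0$, so $\f(TP^\perp)\subset TP^\perp$, which is incompatible with the defining condition $\f(TP^\perp)\subset TP$ unless $\f$ kills the normal bundle. Consequently part (b) of Theorem \ref{main_theorem} simply does not apply to the pair $(N,P)$; the ``characteristic Levi forms'' of $P$ are not even defined, since the covector $\omega(X)=g(\f\nu',X)$ vanishes identically on $TP$. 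The correct tool is part (a), which is designed precisely for the intersection of a generic submanifold with an invariant one, and this is the route the paper takes: it uses the Hopf fibration $\pi:\mathbb{S}^{2n+1}(c)\to\mathbb{C}\mathrm{P}_n$ and sets $P=\pi^{-1}(\sigma)$ for the complex hyperplane $\sigma: z_{n+1}=0$, an invariant submanifold of codimension $2$. The only index bookkeeping needed is then the observation that a Hermitian symmetric form with positive index has index at least $2=\dim M-\dim P$, so hypothesis \eqref{hp(a)} holds; your proposal gestures at this (``push $q$ up to the needed threshold'') but leaves the decisive count unfinished, and your description of the Levi forms of $P$ is internally inconsistent (you call them ``nondegenerate and definite'' while also invoking total geodesy, which by Proposition \ref{scalar-Levi-form/second-fund.form} forces them to vanish).

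A secondary but real confusion concerns dimensions. The boundary of an open hemisphere of $\mathbb{S}^{2n+1}$ is the great sphere $\mathbb{S}^{2n}$, which is even-dimensional and therefore cannot be an invariant submanifold (those are odd-dimensional), nor is it a generic hypersurface ($\xi$ becomes normal to it at the two points $\pm e_{n+1}$). The object that actually works is the codimension-two sphere $\{z_{n+1}=0\}=\{x_{n+1}=x_{2n+2}=0\}$, which is \emph{contained in} the equator $\{x_{2n+2}=0\}$ rather than equal to it; disjointness from the open hemisphere still follows, so the contradiction goes through once part (a) is invoked. Finally, your reduction to a single hemisphere should be justified (the paper does this via the $U(n+1)$-invariance of the deformed Sasakian structure), though this is a minor point compared with the misapplication of part (b).
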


\section{Preliminaries}
\label{prelim}
Let's start by recalling the definitions of $CR$ manifolds, Levi-Tanaka forms and scalar Levi forms.
In the following, given a vector bundle $E$ over a smooth differential manifold $M$, we will denote by $\Gamma(E)$ the $\mathcal{C}^\infty(M)$-module of global smooth sections of $E$.\\

Let $M$ be a smooth real differential manifold of dimension $n$, and let $m,k\in\N$ such that $2m+k=n$. If $HM$ is a real vector subbundle of rank $2m$ of the tangent bundle $TM$ and $J:HM\to HM$ is a bundle isomorphims such that $J^2=-Id$, the couple $(HM,J)$ is said to be a \textit{CR structure} on $M$ if the following properties hold for all $X,Y\in\Gamma(HM)$ smooth section of $HM$:
\begin{enumerate}[label=(\roman*)]
	\item $[JX,JY]-[X,Y]\in\Gamma(HM)$;
	\item $N_J(X,Y):=[JX,JY]-J[JX,Y]-J[X,JY]-[X,Y]=0$.
\end{enumerate}
In this case $(M,HM,J)$ is called a \textit{CR manifold of type (m,k)} and $m,k$ are the \textit{CR dimension} and the \textit{CR codimension} of the $CR$ structure, rispectively.\\

\begin{definition}
	Let $(M,HM,J)$ be a $CR$ manifold of type $(m,k)$. Given a point $x\in M$, the \textit{Levi-Tanaka form of $M$ at $x$} is the bilinear map $$L_x:H_xM\times H_xM\to T_xM/H_xM$$ defined by \begin{equation}
	L_x(X,Y):=\pi_x([\tilde{X},J\tilde{Y}]_x)\quad \forall X,Y\in H_xM,
	\end{equation}
	where $\tilde{X},\tilde{Y}\in\Gamma(HM)$ are two arbitrary extensions of $X,Y$ and $\pi:TM\to TM/HM$ is the canonical projection on the quotient bundle $TM/HM$.
\end{definition}
It is known that $L_x$ is well defined because the value $\pi_x([\tilde{X},J\tilde{Y}]_x)$ only depends on the values of $\tilde{X},\tilde{Y}$ at $x$, that is on $X$ and $Y$.\\
Moreover, according to (i) above, $L_x$ turns to be a vector valued symmetric Hermitian form on the holomorphic tangent space $H_xM$ with respect to the complex structure $J:=J_x$, that is
\begin{equation}
L_x(X,Y)=L_x(JX,JY),\quad L_x(X,Y)=L_x(Y,X)
\end{equation}
for all $X,Y\in H_xM$.

Given a point $x$ on the $CR$ manifold $(M,HM,J)$, we will denote by $$H^0_xM:=\{\omega\in T^*_xM\ |\ \omega(X)=0\quad \forall X\in H_xM\}$$
the annihilator of $H_xM\subset T_xM$. Then we have the following definition.
\begin{definition}
	Let $(M,HM,J)$ be a $CR$ manifold, $x\in M$ and $\omega\in H^0_xM$. The Hermitian form \begin{equation}
	\mathfrak{L}_\omega:H_xM\times H_xM\to\R\quad\text{s.t.}\quad \mathfrak{L}_\omega(X,Y):=\omega L_x(X,Y)
	\end{equation} 
	is called the \textit{scalar Levi form determined by $\omega$ at $x$}.
\end{definition}
\begin{remark}
	Since the scalar Levi forms $\mathfrak{L}_\omega$ are symmetric, it makes sense to consider their index $i(\mathfrak{L}_\omega)$, defined as the minimum between the number of positive and negative eigenvalues of $\mathfrak{L}_\omega$.
\end{remark}

More specifically we recall the following terminology from $CR$ geometry; see for instance \cite{Nacinovich-Medori}.

\begin{definition}
	Let $(M,HM,J)$ be a $CR$ manifold of type $(m,k)$ and let $x\in M$.\\
	$M$ is said \textit{pseudoconvex} at $x$ if $\mathfrak{L}_\omega$ is positive definite for some $\omega\in H^0_xM$.
	If there exists a global section $\omega\in\Gamma(H^0M)$ such that $\mathfrak{L}_\omega$ is positive definite at each point $x\in M$, $M$ is called \textit{strongly pseudoconvex}.\\
	$M$ is said \textit{pseudoconcave} at $x$ if $i(\mathfrak{L}_\omega)>0$ for every $\omega\in H_x^0M$, $\omega\neq0$.\\
	$M$ is said \textit{Levi-flat at $x$} if $\mathfrak{L}_\omega=0$ for every $\omega\in H^0_xM$, i.e. $L_x\equiv0$.  
\end{definition}

We close this section by recalling that a Sasakian manifold $(M,\f,\xi,\eta,g)$, as defined in \cite{Blair}, is a particular kind of strongly pseudoconvex $CR$ manifold of hypersurface type, i.e. of $CR$ codimension 1. We shall refer to \cite{Blair} for the notation and basic facts concerning Sasakian geometry. We only remark that in this case the $CR$ structure is given by the contact distribution $\mathcal{D}=\ker\eta=\left<\xi\right>^\perp$ and the almost complex structure is $J=\f_{|\mathcal{D}}$. Therefore, for any $x\in M$, $H^0_xM$ is spanned by $\eta_x$ and, up to scaling, we have only one scalar Levi form $\mathfrak{L}_{\eta_x}$. Moreover, since $M$ is a contact metric manifold, the identity $$d\eta(X,Y)=g(X,\f Y)$$ yields that $$\mathfrak{L}_{\eta_x}=2{g_x}_{| H_xM\times H_xM}.$$

\section{Invariant and generic submanifolds of Sasakian manifolds}
Let $(M,\f,\xi,\eta,g)$ be a Sasakian manifolds. We shall consider two kind of submanifolds of $M$: the invariant submanifolds and the generic submanifolds, defined as follows.

\begin{definition}
	An \textit{invariant submanifold} of a Sasakian manifold $(M,\f,\xi,\eta,g)$ is a real submanifold $N$ of $M$ such that $\dim N<\dim M$ and $\f(TN)\subset TN$.
\end{definition}

It is known that invariant submanifolds are always tangent to $\xi$ and they inherit a Sasakian structure from the Sasakian structure of the ambient manifold by restriction (\cite{Blair}). Thus invariant submanifolds have odd dimension.
Moreover, the second fundamental form $\alpha$ of $N$ satisfies the following property:
\begin{equation}\label{invariant subm./sec.f.f.}
\alpha(X,X)+\alpha(\f X,\f X)=0\quad\forall X\in\X(N).
\end{equation}

\smallskip
\begin{definition}
	A \textit{generic submanifold} of a Sasakian manifold $(M,\f,\xi,\eta,g)$ is a submanifold $N$ such that $\xi$ is nowhere normal to $N$ and $\f(TN^\perp)\subset TN$, where $TN^\perp$ is the normal bundle of $N$.
\end{definition}

\begin{example}\label{Rmk:Hypersurface}
	Every hypersurface $N$ with $\xi$ nowhere normal to $N$ is a generic submanifold of $M$. 
\end{example}

If $N$ is a generic submanifold of a Sasakian manifold $(M,\f,\xi,\eta,g)$, the component $\zeta$ of $\xi$ tangent to $N$ is always nonzero and we can consider the following orthogonal decomposition of the tangent bundle of $N$:
\begin{equation}
TN=\left<\zeta\right>\oplus\left<\zeta\right>^\perp=\left<\zeta\right>\oplus\f(TN^\perp)\oplus HN,
\end{equation}
where $\f(TN^\perp)$ is a subbundle of $TN$ because for every $\nu\in TN^\perp$, $\nu\neq0$, we always have that $\f\nu\neq0$, since $\xi$ is nowhere normal to $N$.\\ Note that in each point $x\in N$ $H_xN$ is given by 
\begin{equation}
H_xN=\{X\in T_xN\ |\ X\perp\zeta_x\ \text{and}\ \f X\in T_xN\}.
\end{equation} 

\begin{remark}\label{HN_normal_to_xi}
	It is clear that for any $X\in H_xN$, we have $\eta_x(X)=0$, since $X$ is both tangent to $N$ and normal to $\zeta$. However this is not true in general for vectors which are normal to $N$, unless $\xi$ is everywhere tangent to $N$. 
\end{remark}

Let us consider the restriction $J:=\f_{|HN}:HN\to HN$ of $\f$ to $HN$. $J$ is well defined bacause $\f(HN)\subset HN$.  Indeed, for all $x\in N$ and for all $X\in H_xN$, we have $$g(\f X,\zeta_x)=g(\f X,\xi_x)=0,$$ and for all $\nu\in T_xN^\perp$: $$g(\f X,\f\nu)=g(X,\nu)-\eta(X)\eta(\nu)=0,$$ by the previous remark. Thus $\f X$ is orthogonal to $\f(T_xN^\perp)$ and therefore $\f X\in H_xN$.\\
Moreover, since $HN\subset\left<\xi\right>^\perp$, we have that $J^2=-Id$, which means that $J$ is an almost complex structure on $HN$.

\begin{proposition}\label{generic_CR_srtucture}
	Let $(M,\f,\xi,\eta,g)$ be a Saskian manifold of dimension $2n+1$ and let $N\subset M$ be a generic submanifold of codimension $p$. Then the couple $(HN,J)$ defines a $CR$ structure on $N$ of $CR$ codimension $p+1$.
\end{proposition}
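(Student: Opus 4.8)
The plan is to first settle the dimension count and then reduce the integrability conditions (i)–(ii) to an involutivity statement that can be imported from the ambient manifold.

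First I would record the numerology. From the orthogonal decomposition $TN=\langle\zeta\rangle\oplus\varphi(TN^\perp)\oplus HN$, the summand $\langle\zeta\rangle$ has rank $1$ and $\varphi(TN^\perp)$ has rank $p$, since $\varphi$ is injective on $TN^\perp$. Hence
\[
\mathrm{rank}(HN)=\dim N-1-p=(2n+1-p)-1-p=2(n-p),
\]
so $(HN,J)$ has CR dimension $m=n-p$ and CR codimension $\dim N-2m=(2n+1-p)-2(n-p)=p+1$, as claimed. Since the discussion preceding the statement already shows that $J=\varphi_{|HN}$ is a well-defined almost complex structure on $HN$, it remains only to verify integrability.

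Next I would reformulate integrability complex-analytically. Let $T^{1,0}N\subset HN\otimes\C$ be the $i$-eigenbundle of the $\C$-linear extension of $J$, so that $T^{1,0}N=\{X-iJX : X\in HN\}$, and recall the standard fact that $(HN,J)$ satisfies (i)–(ii) precisely when $T^{1,0}N$ is involutive, i.e. closed under the Lie bracket of sections. The same description applies to the ambient contact distribution: set $T^{1,0}M=\{U-i\varphi U : U\in\mathcal{D}\}$. Because $M$ is Sasakian it is an integrable CR manifold (\S\ref{prelim}), so $T^{1,0}M$ is involutive; and since $HN\subset\mathcal{D}$ (Remark \ref{HN_normal_to_xi}) with $J=\varphi_{|HN}$, we get the inclusion $T^{1,0}N\subset T^{1,0}M|_N$. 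The key point will then be the pointwise identity
\[
T^{1,0}_xN=(T_xN\otimes\C)\cap T^{1,0}_xM,\qquad x\in N.
\]
The inclusion $\subseteq$ is clear. For the reverse, an element of the right-hand side is $v-i\varphi v$ with $v\in\mathcal{D}_x$, lying in $T_xN\otimes\C$; taking real and imaginary parts gives $v\in T_xN$ and $\varphi v\in T_xN$, and since $v\in\mathcal{D}_x=\langle\xi_x\rangle^\perp$ together with $v\in T_xN$ forces $v\perp\zeta_x$ (because $g(v,\xi_x)=g(v,\zeta_x)$ for tangent $v$), we conclude $v\in H_xN$, so the element lies in $T^{1,0}_xN$.

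Finally I would deduce involutivity of $T^{1,0}N$. Given $Z,W\in\Gamma(T^{1,0}N)$, I would extend them to sections $\tilde Z,\tilde W\in\Gamma(T^{1,0}M)$ on a neighbourhood of $N$ in $M$, which is possible thanks to the inclusion above; their restrictions to $N$ being $Z,W$, the fields $\tilde Z,\tilde W$ are tangent to $N$ along $N$, whence $[\tilde Z,\tilde W]_{|N}=[Z,W]$ is tangent to $N$, while $[\tilde Z,\tilde W]\in\Gamma(T^{1,0}M)$ by ambient involutivity. The pointwise identity then gives $[Z,W]\in(T N\otimes\C)\cap T^{1,0}M|_N=T^{1,0}N$, proving involutivity and hence that $(HN,J)$ is a CR structure of codimension $p+1$. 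The genuinely delicate step is the pointwise identity: it is exactly where the full force of being a \emph{generic} submanifold (not merely $HN\subset\mathcal{D}$) enters, ensuring that the bracket cannot pick up a component along $\varphi(TN^\perp)$; a direct check of (i)–(ii) via the Sasakian normality tensor is possible but founders precisely on isolating this $HN$-component, which is why the complexified route is preferable. The remaining extension-and-tangency bookkeeping is routine, once one recalls that the Lie bracket of vector fields tangent to a submanifold is again tangent to it.
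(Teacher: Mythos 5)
Your proof is correct, but it follows a genuinely different route from the paper's. The paper verifies conditions (i) and (ii) directly: for $X,Y\in\Gamma(HN)$ it first shows $[X,Y]-[JX,JY]$ is orthogonal to $\zeta$ (via $d\eta(X,Y)=-\tfrac12\eta[X,Y]$ on sections of $\ker\eta$), then uses the Sasakian identity $(\nabla_X\f)Y=g(X,Y)\xi-\eta(Y)X$ to compute $\f([JX,JY]-[X,Y])=-[\f X,Y]-[X,\f Y]$, which is tangent to $N$; this simultaneously isolates the $HN$-component (orthogonality to $\f(TN^\perp)$ follows because the image under $\f$ is tangent to $N$) and, after applying $J$, yields $N_J=0$. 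You instead complexify, establish the pointwise identity $T^{1,0}_xN=(T_xN\otimes\C)\cap T^{1,0}_xM$ using exactly the characterization $H_xN=\{X\in T_xN : X\perp\zeta_x,\ \f X\in T_xN\}$, and pull involutivity back from the ambient $T^{1,0}M$ by extending sections and using that brackets of fields tangent to $N$ along $N$ remain tangent. Both arguments are complete; yours is more conceptual and transfers verbatim to other ambient integrable CR structures, while the paper's is self-contained at the level of real tensors and produces the explicit bracket formula as a by-product. Your closing remark that a direct check of (i)--(ii) ``founders'' on isolating the $HN$-component is not accurate --- the paper's direct computation does precisely that without difficulty --- but this does not affect the validity of your argument. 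Your explicit rank count $\mathrm{rank}(HN)=2(n-p)$, giving CR codimension $p+1$, is a useful addition that the paper leaves implicit.
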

\begin{proof}
	First we recall that Sasakian manifolds are characterized by means of the following identity, involving the covariant derivative of $\varphi$ with respect to the Levi-Civita connection (see \cite{Blair}):
	\begin{equation}\label{Sasaki}
		(\nabla_X\f)Y=g(X,Y)\xi-\eta(Y)X.
	\end{equation}
	Now, fix $x\in N$, $X,Y\in H_xN$ and take two smooth section in $\Gamma(HN)$ which extend $X,Y$. Since $X,Y$ are normal to $\xi_x$, $\eta(X)=\eta(Y)=0$ so that $d\eta(X,Y)=-\frac12\eta[X,Y]$. The same holds for $JX$ and $JY$. 
	Thus we have 
	$\eta([X,Y]-[JX,JY])=0$, namely $[X,Y]-[JX,JY]$ is normal to $\xi_x$. But since $[X,Y]-[JX,JY]$ is tangent to $N$ this means that $[X,Y]-[JX,JY]$ is normal to $\zeta_x$.\\
	Now, by using (\ref{Sasaki}), 
	we have: 
	\begin{eqnarray}\label{eqn1}
		&&\f([JX,JY]-[X,Y])=\nonumber\\
		&=&\f(\nabla_{\f X}{\f Y}-\nabla_{\f Y}{\f X})-\f(\nabla_XY-\nabla_YX)= \nonumber \\
		&=&-\nabla_{\f X}Y+\nabla_{\f Y}X-\f\nabla_XY+\f\nabla_YX=\nonumber\\
		&=&-\nabla_{\f X}Y+\nabla_{\f Y}X+(\nabla_X\f)Y-\nabla_X\f Y - (\nabla_Y\f)X+\nabla_Y\f X=\nonumber\\
		&=&[Y,\f X]+[\f Y,X]-\eta(Y)X+\eta(X)Y=\nonumber\\
		&=&-[\f X,Y]-[X,\f Y],
	\end{eqnarray}
	which is tangent to $N$ since so are $X,\f X,Y,\f Y$. \\
	Thus we have proved that $[X,Y]-[JX,JY]\in H_xN$. Finally, rewriting (\ref{eqn1}) as $$J([JX,JY]-[X,Y])=-[JX,Y]-[X,JY],$$ by applying $J$ and by using $J^2=-Id$, it follows that $N_J=0$.
\end{proof}

Thanks to this result, for each point $x\in N$ it make sense to consider the Levi-Tanaka form at $x$ and the scalar Levi forms $\mathfrak{L}_\omega$ where $\omega$ varies in $H_x^0N$. \\
In particular, given a non zero normal direction $\nu\in T_xN^\perp$, consider the 1-form $\omega:T_xN\to\R$ such that 
\begin{equation}
\omega(X):=g(X,\f\nu)\quad\forall X\in T_xN.
\end{equation}
Clearly $\omega\neq0$ because of $\f\nu\in T_xN$ and $\omega(\f\nu)=\|\f\nu\|^2\neq0$. Furthermore $\omega\in H_x^0N$ by definition of $H_xN$.\\
The scalar Levi form $\mathfrak{L}_\omega$ determined by $\omega$ 
will be denoted by $\mathfrak{L}_\nu$; for convenience, we shall adopt the following terminology:

\begin{definition}
	For any $x\in N$ the scalar Levi forms $\mathfrak{L}_\nu$ determined by $\nu\in T_xN^\perp$, $\nu\neq0$, will be called \textit{characteristic Levi forms} of $N$ at $x$.
\end{definition}

The following proposition establishes a relationship between the second fundamental form of a generic submanifold $N$ and its characteristic Levi forms.

\begin{proposition}\label{scalar-Levi-form/second-fund.form}
	Let $(M,\f,\xi,\eta,g)$ be a Sasakian manifold and let $N\subset M$ be a generic submanifold with second fundamental form $\alpha$. Given $x\in N$ and $0\neq\nu\in T_xN^\perp$, one has \begin{equation}
	\mathfrak{L}_\nu(X,X)=g_x(\alpha(X,X)+\alpha(\f X,\f X),\nu)
	\end{equation}
	for every $X\in H_xN$.
\end{proposition}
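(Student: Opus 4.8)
The plan is to unwind the definitions and transfer the computation from Lie brackets to the ambient Levi--Civita connection $\nabla$ via the Gauss formula. Fix $x\in N$ and $X\in H_xN$, and let $\tilde X\in\Gamma(HN)$ be an extension of $X$. By the definition of the Levi--Tanaka form and since $J=\f_{|HN}$, we have $L_x(X,X)=\pi_x([\tilde X,\f\tilde X]_x)$, whence, recalling that $\mathfrak{L}_\nu$ is the scalar Levi form attached to $\omega(\cdot)=g(\cdot,\f\nu)$,
\begin{equation}
\mathfrak{L}_\nu(X,X)=g_x([\tilde X,\f\tilde X]_x,\f\nu).
\end{equation}
Using that $\nabla$ is torsion free, I would replace the bracket by $[\tilde X,\f\tilde X]=\nabla_{\tilde X}(\f\tilde X)-\nabla_{\f\tilde X}\tilde X$ and evaluate the two resulting terms separately.

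For the first term, expanding $\nabla_{\tilde X}(\f\tilde X)=(\nabla_{\tilde X}\f)\tilde X+\f\nabla_{\tilde X}\tilde X$ and applying (\ref{Sasaki}) together with $\eta(\tilde X)=0$ gives $(\nabla_{\tilde X}\f)\tilde X=g(X,X)\xi$ at $x$; since $g(\xi,\f\nu)=\eta(\f\nu)=0$, this summand does not contribute. For the remaining piece I would use the compatibility identity $g(\f A,\f B)=g(A,B)-\eta(A)\eta(B)$ to get $g(\f\nabla_{\tilde X}\tilde X,\f\nu)=g(\nabla_{\tilde X}\tilde X,\nu)-\eta(\nabla_{\tilde X}\tilde X)\eta(\nu)$, and the Gauss formula to identify $g(\nabla_{\tilde X}\tilde X,\nu)=g(\alpha(X,X),\nu)$. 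For the second term, skew-symmetry of $\f$ gives $-g(\nabla_{\f\tilde X}\tilde X,\f\nu)=g(\f\nabla_{\f\tilde X}\tilde X,\nu)$; since $(\nabla_{\f\tilde X}\f)\tilde X=g(\f X,X)\xi-\eta(X)\f\tilde X$ vanishes at $x$ (both $g(\f X,X)$ and $\eta(X)$ are zero), one has $\f\nabla_{\f\tilde X}\tilde X=\nabla_{\f\tilde X}(\f\tilde X)$ at $x$, and the Gauss formula yields $g(\alpha(\f X,\f X),\nu)$.

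Collecting the two contributions produces the desired formula up to the spurious term $\eta(\nabla_{\tilde X}\tilde X)\eta(\nu)$, and the crux of the argument is to show this term vanishes. This is where the weaker hypothesis on $N$ matters: since we do not assume $N$ tangent to $\xi$, a priori $\eta(\nu)\neq0$, so I must argue that $\eta(\nabla_{\tilde X}\tilde X)=0$. This follows because $\tilde X$ is a section of $HN\subset\ker\eta$, so $g(\tilde X,\xi)\equiv0$; differentiating and using the Sasakian identity $\nabla_{\tilde X}\xi=-\f\tilde X$ together with the skew-symmetry of $\f$ gives $\eta(\nabla_{\tilde X}\tilde X)=g(\nabla_{\tilde X}\tilde X,\xi)=-g(\tilde X,\nabla_{\tilde X}\xi)=g(\tilde X,\f\tilde X)=0$. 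With this the extra term drops and the identity $\mathfrak{L}_\nu(X,X)=g_x(\alpha(X,X)+\alpha(\f X,\f X),\nu)$ follows. The computation is otherwise routine; the only genuinely delicate point, and the main obstacle, is precisely this cancellation of the $\eta(\nu)$-contributions, which is what makes the statement valid in the present weaker notion of generic submanifold.
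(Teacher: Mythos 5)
Your proof is correct and follows essentially the same computation as the paper's: replace the Lie bracket by covariant derivatives via torsion-freeness, apply the Sasakian identity (\ref{Sasaki}), and conclude with the Gauss formula. The only difference is that you explicitly justify the vanishing of the term $\eta(\nabla_{\tilde X}\tilde X)\eta(\nu)$ by differentiating $g(\tilde X,\xi)\equiv 0$ and using $\nabla_{\tilde X}\xi=-\f\tilde X$, a point the paper leaves implicit under ``$X$, $\f X$ and $\f\nu$ are all orthogonal to $\xi$''; this extra care is appropriate here, since $\eta(\nu)$ need not vanish for the weaker notion of generic submanifold adopted in the paper.
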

\begin{proof} 
	Fix $x\in N$, $X\in H_xN$ and consider a smooth section in $\Gamma(HN)$ which extends $X$.
	Then $\varphi X$ is again tangent to $N$.
	Using the fact that $X$, $\varphi X$ and $\varphi\nu$ are all orthogonal to $\xi$ and identity (\ref{Sasaki}), we get:
	\begin{eqnarray*}
		&&\mathfrak{L}_\nu(X,X)=\\
		&=&g_x([X,\varphi X],\varphi\nu)=\\
		&=&g_x(\nabla_X\varphi X,\varphi\nu)-g_x(\nabla_{\varphi X}X,\varphi\nu)=\\
		&=&g_x(\varphi\nabla_X X,\varphi\nu)+g_x(\varphi\nabla_{\varphi X}X,\nu)=\\
		&=&g_x(\nabla_X X,\nu)+g_x(\nabla_{\varphi X}{\varphi X},\nu)=\\	
		&=&g_x(\alpha(X,X)+\alpha(\f X,\f X),\nu).
	\end{eqnarray*}   
\end{proof}

In the following we will deal with generic and invariant submanifolds of Sasakian manifolds with nonnegative $\f$-bisectional curvature. So we recall the definition of this kind of curvature which was introduced by Tanno and Baik in \cite{Tanno-Baik} and used by in \cite{Ornea} and \cite{Pitis} to obtain Frankel type theorems about the intersection of two invariant submanifolds. It is an adaptation to the Sasakian case of the notion of holomorphic bisectional curvature introduced by Goldberg and Kobayashi in \cite{Golgberg-Kobayashi} for K\"ahler manifolds.

\begin{definition}\label{fi-bisectional curvature}
	Let $(M,\f,\xi,\eta,g)$ be a Sasakian manifold. We say that $M$ has  \textit{nonnegative $\f$-bisectional curvature} if 
	 \begin{equation}
		H(X,Y):=K(X,Y)+K(X,\f Y)\ge 0
	\end{equation}
	for any $x\in M$ and for any $X,Y\in T_xM$ such that $X,Y,\f Y,\xi_x$ are mutually orthonormal, where $$K(X,Y):=R(X,Y,X,Y)=g(R(X,Y)Y,X)$$ denotes the sectional curvature at $x$ of the 2-plane $\left<X,Y\right>\subset T_xM$, and similarly for $K(X,\f Y)$.\\ 
\end{definition}

\begin{proposition}\label{Prop:curvature}
	Let $(M,\f,\xi,\eta,g)$ be a Sasakian manifold with nonnegative $\f$-bisectional curvature. Let $x\in M$ and $X,W\in T_xM$ such that $\eta(X)=0$ and $X,\f X,W$ are mutually orthonormal. Then one has:
	$$R(X,W,X,W)+R(\f X, W,\f X,W)\ge0.$$
\end{proposition}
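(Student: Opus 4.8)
The plan is to reduce the statement to the defining inequality of Definition \ref{fi-bisectional curvature}. Since $K(A,B)=R(A,B,A,B)$, the assertion is precisely $K(X,W)+K(\varphi X,W)\ge 0$. Using the symmetry $K(A,B)=K(B,A)$ of sectional curvature, this reads $K(W,X)+K(W,\varphi X)$, which is formally the quantity $H(W,X)$ from the definition; the one thing preventing a direct application is that $W$ is not assumed orthogonal to $\xi_x$ (we only know that $X,\varphi X,W$ are orthonormal and $\eta(X)=0$). Disposing of this Reeb component of $W$ is essentially the whole content of the proof.

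First I would split $W=W'+c\,\xi_x$, where $c=\eta(W)$ and $\eta(W')=0$. Because $\eta\circ\varphi=0$ gives $X,\varphi X\perp\xi_x$, the vector $W'$ is orthogonal to $X,\varphi X$ and $\xi_x$, and $\|W'\|^2=1-c^2$. Expanding $K(X,W)=g(R(X,W)W,X)$ and the analogous $K(\varphi X,W)$ by multilinearity of the curvature tensor splits each one into a pure $W'$-term, two cross terms linear in $c$ involving $\xi_x$, and a term quadratic in $c$.

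The key step is to show that the cross terms vanish, for which I would invoke the standard Sasakian curvature identity
\[ R(A,B)\xi=\eta(B)\,A-\eta(A)\,B, \]
together with the symmetries of $R$. Since $\eta(X)=\eta(\varphi X)=\eta(W')=0$ and $W'\perp X,\varphi X$, every mixed term drops out, while the same identity yields $R(X,\xi_x)\xi_x=X$ and $R(\varphi X,\xi_x)\xi_x=\varphi X$, so that $K(X,\xi_x)=K(\varphi X,\xi_x)=1$. The expansion then collapses to
\[ K(X,W)+K(\varphi X,W)=(1-c^2)\bigl(K(X,\widehat W)+K(\varphi X,\widehat W)\bigr)+2c^2, \]
where $\widehat W=W'/\|W'\|$ when $W'\neq0$ (if $W'=0$, the left-hand side is simply $2$).

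Finally I would apply the hypothesis. When $W'\neq0$ the vectors $\widehat W,X,\varphi X,\xi_x$ are mutually orthonormal with $\eta(X)=0$, so by symmetry of $K$ and Definition \ref{fi-bisectional curvature} the bracket equals $H(\widehat W,X)\ge0$; since moreover $1-c^2=\|W'\|^2\ge0$ and $2c^2\ge0$, both summands are nonnegative and the inequality follows. I expect the main obstacle to be precisely the bookkeeping of the Reeb direction — checking, via the Sasakian identity above, that the $\xi_x$-cross terms cancel and that the $\xi$-sectional curvature equals $1$ — which is exactly what reveals that the extra $\xi_x$-component of $W$ can only help the inequality.
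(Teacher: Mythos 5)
Your proposal is correct and follows essentially the same route as the paper: the same decomposition of $W$ into its $\xi_x$-component and its orthogonal part, the same use of the Sasakian identity $R(A,B)\xi=\eta(B)A-\eta(A)B$ to kill the cross terms and control the $\xi$-terms, and the same final appeal to the nonnegativity of the $\f$-bisectional curvature. Your version is slightly more explicit (computing $K(X,\xi_x)=1$ rather than merely noting nonnegativity, normalizing $W'$, and treating the degenerate case $W'=0$), but the argument is the same.
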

\begin{proof}
	Here $W$ might not be normal to $\xi_x$. However, by decomposing $W$ as $W=Y+Z$ with $Y,Z\in T_xM$, $Y\perp\xi_x$ and $Z=\lambda\xi_x$,  we get:
	\begin{eqnarray}\label{eq:curv1}
		R(X,W,X,W)
		&=&R(X,Y,X,Y)+R(X,Z,X,Z)+2g(R(X,Y)Z,X)=\nonumber\\
		&=&R(X,Y,X,Y)+R(X,Z,X,Z),
	\end{eqnarray}
	where we have used the following curvature characterization of Sasakian manifolds (see \cite{Blair}):
	$$R(X,Y)\xi=\eta(Y)X-\eta(X)Y\quad \forall X,Y\in\X(M).$$
	From this formula we also get that $R(X,Z,X,Z)$ is nonnegative. Similarly, 
	\begin{equation}\label{eq:curv2}
		R(\f X,W,\f X,W)=R(\f X,Y,\f X,Y)+R(\f X,Z,\f X,Z),
	\end{equation}
	where $R(\f X,Z,\f X,Z)\ge0$.
	Thus, by adding the identities (\ref{eq:curv1}) and (\ref{eq:curv2}), we have: $$R(X,W,X,W)+R(\f X,W,\f X,W)\ge0,$$ since, up to scaling, it is the sum of a $\f$-bisectional curvature and two nonnegative terms.
\end{proof}

\section{Examples}
In this section we wish to exhibit some examples of Sasakian manifolds with nonnegative $\f$-bisectional curvature and of generic submanifolds whose characteristic Levi forms have all positive index.\\

Let's consider a regular Sasakian manifold $(M,\f,\xi,\eta,g)$ of dimension $2n+1$ which fibers on a K\"{a}hler manifold $(N,J,g')$ of dimension $2n$. Then it is given a Riemannian submersion $\pi:M\to N$ whose fibers are 1-dimensional submanifolds of $M$ tangent to $\xi$. Hence the vertical distribution $\mathcal{V}$ is locally spanned by $\xi$: 
\begin{equation}
\mathcal{V}_x=T_x\pi^{-1}(p)=\ker(d\pi)_x=\left<\xi_x\right>
\end{equation} 
for all $x\in M$ such that $\pi(x)=p\in N$. Moreover the differential $d\pi$ commutes with the tensor $\f$ and the complex structure $J$: \begin{equation}\label{J,fi}
d\pi\circ\f=J\circ d\pi.
\end{equation}

In \cite{Tanno-Baik} it is shown that, given a point $x\in M$ and two tangent vector $X,Y\in T_xM$ such that $X,Y,\f Y,\xi_x$ are mutually orthonormal, the $\f$-bisectional curvature $H(X,Y)$ is given by $$H(X,Y)=H'(X',Y'),$$ where $H'(X',Y')$ is the holomorphic bisectional curvature of $N$ at $p=\pi(x)$, related to the vectors $X':=(d\pi)_xX$ and $Y':=(d\pi)_xY$. Hence, if $H'$ is nonnegative, so is the $\f$-bisectional curvature $H$ of $M$.

For instance, Takahashi's globally $\f$-symmetric spaces are examples of regular Sasakian manifolds and  from Theorem 6.4 in \cite{Takahashi} it follows that those of compact type have nonnegative $\f$-bisectional curvature. \\

Now we consider the same Riemannian submersion $\pi:M\to N$ to construct examples of generic submanifolds of the Sasakian manifold $M$.\\
Let's point out that, since $\pi$ is a surjective submersion, if $S'\subset N$ is a submanifold of $N$, then $S:=\pi^{-1}(S')$ is a submanifold of $M$ with the same codimension of $S'$. Moreover, since $$T_xS=(d\pi)_x^{-1}(T_{\pi(x)}S')\quad\forall x\in S,$$ we have that $S$ is tangent to $\xi$.
In particular, if $S'$ is a real hypersurface of $N$, $S$ is a hypersurface of $M$ tangent to $\xi$ and so $S$ is a generic submanifold of $M$, due to Remark \ref{Rmk:Hypersurface}.\\
We want to show that if $S'$ is a psedoconcave hypersurface of $N$, then $S$ is a generic submanifold of $M$, whose characteristic Levi forms have positive index. \\
First of all we recall that the canonical $CR$ structure on a hypersurface $S'$ of the K\"{a}hler manifold $(N,J,g')$ is given by 
\begin{equation}
HS':=TS'\cap J(TS').
\end{equation}
Then, by using identity (\ref{J,fi}) and $\f^2_{|HS}=-Id$, it follows that $(d\pi)_xX\in H_pS'$ for any $x\in S$ such that $\pi(x)=p\in S'$ and for any $X\in H_xS$.\\
Now, let $x\in S$, $p=\pi(x)\in S'$, $X\in H_xS$ and set $X':=(d\pi)_xX\in H_pS'$. 
For any non zero normal vector $\nu\in T_xS^\perp$, from the definition of the characteristic Levi form $\mathfrak{L}_\nu$ and from an elementary property of Riemannian submersions (see for instance Proposition 1.1 in \cite{Falcitelli-Ianus-Pastore}), we get:
\begin{eqnarray*}
	\mathfrak{L}_\nu(X,X)&=&g_x([X,\f X],\f \nu)=\\
	&=&g_x(h[X,\f X],\f\nu)=\\
	&=&g'_p([X',JX'],J\nu')=\\
	&=&\mathfrak{L}'_{\nu'}(X',X'),
\end{eqnarray*} 
where $h[X,\f X]$ is the horizontal component of $[X,\f X]$, $\nu'=(d\pi)_x\nu\in(T_pS')^\perp$ and $\mathfrak{L}'_{\nu'}$ is the scalar Levi form on $H_pS'$ determined by the covector $$\omega'(X')=g'_p(X',J\nu')\quad \forall X'\in T_pS'.$$
In conclusion we have proved that, for every real hypersurface $S'$ of  $N$:
\begin{equation}\label{L=L'}
\mathfrak{L}_\nu(X,X)=\mathfrak{L}_{\nu'}(X',X')\quad \forall X\in H_xS.
\end{equation}
In particular, this implies that all the Levi forms $\mathfrak{L}_\nu$ have positive index if $S'$ is pseudoconcave.

\section{Proofs of the results}
In this section we give the proof of our Frankel type theorem. We begin by proving a lemma which provides us a way to construct an orthonormal set $\{E,\f E\}$ consisting of parallel vector fields along a geodesic $\gamma$.  

\begin{lemma}\label{Lemma:Etilde=fE}
	Let $N$ be a generic submanifold of a Sasakian manifold	\\ $(M,\f,\xi,\eta,g)$ and let $\gamma:[0,l]\to M$ be a geodesic starting from $x\in N$ and orthogonal to $N$ at $x$. If $e\in H_xN$ and $E,\tilde{E}\in\X(\gamma)$ are obtained by parallel translation of $e,\f e$ respectively along $\gamma$, then $E$ is orthogonal to $\xi$ along $\gamma$ and $\tilde{E}=\f E$.
\end{lemma}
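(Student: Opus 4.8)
The plan is to deduce both assertions from two facts proved in sequence: first that $E$ stays orthogonal to both $\dot\gamma$ and $\xi$ along $\gamma$, and then that $\varphi E$ is parallel along $\gamma$, which forces $\tilde E=\varphi E$ by uniqueness of parallel transport. Throughout I will use the Sasakian identity (\ref{Sasaki}) together with its standard consequence $\nabla_X\xi=-\varphi X$ (obtained from (\ref{Sasaki}) by setting $Y=\xi$; see \cite{Blair}), the skew-symmetry $g(\varphi X,Y)=-g(X,\varphi Y)$ of the almost contact metric structure, and the fact that $\gamma$ being a geodesic means $\dot\gamma$ is parallel and $|\dot\gamma|^2$ is constant.

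First I would record the easy orthogonality to $\dot\gamma$: since $E$ and $\dot\gamma$ are both parallel, $\frac{d}{dt}g(\dot\gamma,E)=0$, and at $t=0$ we have $g(\dot\gamma(0),e)=0$ because $e\in H_xN\subset T_xN$ while $\dot\gamma(0)\in T_xN^\perp$. Hence $g(\dot\gamma,E)\equiv0$ on $[0,l]$.

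The heart of the matter is showing $\eta(E)\equiv0$, and this is the main obstacle: $\nabla_{\dot\gamma}E=0$ does not directly control $\eta(E)=g(E,\xi)$, since $\xi$ is not parallel along $\gamma$. To overcome this I would introduce the two scalar functions $u:=g(E,\xi)$ and $v:=g(\varphi E,\dot\gamma)$ and derive a closed system. Using $\nabla_{\dot\gamma}E=0$, $\nabla_{\dot\gamma}\xi=-\varphi\dot\gamma$ and skew-symmetry gives $u'=-g(E,\varphi\dot\gamma)=g(\varphi E,\dot\gamma)=v$. Next, from (\ref{Sasaki}) one computes $\nabla_{\dot\gamma}(\varphi E)=(\nabla_{\dot\gamma}\varphi)E=g(\dot\gamma,E)\xi-\eta(E)\dot\gamma$, which by the previous step equals $-u\,\dot\gamma$; differentiating $v$ and using that $\dot\gamma$ is parallel then yields $v'=-u\,|\dot\gamma|^2$. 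Therefore $u''+|\dot\gamma|^2 u=0$. The Cauchy data vanish: $u(0)=\eta(e)=0$ since $e\in H_xN$, and $v(0)=g(\varphi e,\dot\gamma(0))=0$ since $\varphi e\in H_xN\subset T_xN$ is tangent to $N$ while $\dot\gamma(0)$ is normal. By uniqueness for this linear Cauchy problem $u\equiv0$, which is exactly the first assertion that $E$ is orthogonal to $\xi$.

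Finally, with $\eta(E)\equiv0$ and $g(\dot\gamma,E)\equiv0$ in hand, the identity $\nabla_{\dot\gamma}(\varphi E)=g(\dot\gamma,E)\xi-\eta(E)\dot\gamma$ collapses to $\nabla_{\dot\gamma}(\varphi E)=0$, so $\varphi E$ is parallel along $\gamma$. Since $(\varphi E)(0)=\varphi e=\tilde E(0)$ and $\tilde E$ is by definition the parallel field with this initial value, uniqueness of parallel transport gives $\tilde E=\varphi E$, completing the argument.
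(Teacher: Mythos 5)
Your proposal is correct and follows essentially the same route as the paper: both reduce the lemma to showing $\eta(E)\equiv 0$ via the Sasakian identity $\nabla_{\dot\gamma}(\varphi E)=-\eta(E)\dot\gamma$ and then solve the linear Cauchy problem $f''+\|\dot\gamma\|^2 f=0$ with vanishing initial data (you merely write it as the equivalent first-order system in $u=\eta(E)$ and $v=g(\varphi E,\dot\gamma)$). No gaps; the argument matches the paper's proof step for step.
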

\begin{proof}
	To prove that $\tilde{E}=\f E$ we need to show that $\f E$ is parallel along $\gamma$ and for this purpose we follow the same idea of a proof in \cite{Ornea}.\\ Since $e\in H_xN$ is tangent to $N$, while $\dot\gamma(0)$ is normal to $N$, we have that $e\perp\dot\gamma(0)$. Moreover, since $E$ and $\dot\gamma$ are parallel along $\gamma$ and the parallel transport is a linear isometry, we have that $E\perp\dot\gamma$ along $\gamma$, that is $g(E,\dot\gamma)(t)=0$ for every $t\in[0,l]$. Hence:
	\begin{equation}
	(\nabla_{\dot\gamma}\f)E=g(\dot\gamma,E)\xi-\eta(E)\dot\gamma=-\eta(E)\dot\gamma.
	\end{equation}
	On the other hand, \begin{equation}
	(\nabla_{\dot\gamma}\f)E=\nabla_{\dot\gamma}\f E-\f(\nabla_{\dot\gamma}E)=\nabla_{\dot\gamma}\f E, 
	\end{equation}
	since $E$ is parallel along $\gamma$. Therefore $\nabla_{\dot\gamma}\f E=-\eta(E)\dot\gamma$ and to prove the our claim, we just have to prove that $\eta(E)(t)=0$ for every $t\in [0,l]$. From this will also follow that $E$ is normal to $\xi$ along $\gamma$. So we define the function $$f:[0,l]\to\R\quad\text{s.t.}\quad f(t):=\eta(E)(t)=g_{\gamma(t)}(E(t),\xi_{\gamma(t)})$$ and we prove that $f$ is identically zero. To this aim, we note that 
	\begin{eqnarray}
	f'=g(\nabla_{\dot\gamma}E,\xi)+g(E,\nabla_{\dot\gamma}\xi)=-g(E,\f\dot\gamma)=g(\f E,\dot\gamma);\\
	f''=g(\nabla_{\dot\gamma}\f E,\dot\gamma)+g(\f E,\nabla_{\dot\gamma}\dot\gamma)=-\eta(E)g(\dot\gamma,\dot\gamma)=-\|\dot\gamma\|^2f,
	\end{eqnarray}
	where $c:=\|\dot\gamma\|^2\in\R$ is constant. Moreover, $f(0)=g_x(e,\xi_x)=0$ by Remark \ref{HN_normal_to_xi} and $f'(0)=g_x(\f e, \dot\gamma(0))=0$ because of $e,\f e\in H_xN\subset T_xN$ and $\dot\gamma(0)\in T_xN^\perp$.\\
	In conclusion, we proved that $f$ is a solution of the following Cauchy problem $$\left\{\begin{array}{c}
	f''+cf=0\\f(0)=0\\f'(0)=0
	\end{array}\right.$$ so that $f=0$.
\end{proof}
\medskip

\textbf{Proof of Theorem \ref{main_theorem}.}
	We prove the theorem discussing simultaneously the two cases (a) and (b) .\\
	Assume by contradiction that $N\cap P\neq\emptyset$. Thanks to the topological assumptions on the submanifolds, there exist two points $x\in N$ and $y\in P$ such that $l:=d(x,y)=d(N,P)>0$. Moreover, by the completeness of $M$, there exists a length minimizing geodesic $\gamma:[0,l]\to M$, parametrized by arc length, joining $x$ and $y$ and intersecting orthogonally $N$ and $P$. 
	Set $\nu:=\dot\gamma(0)\in T_xN^\perp$ and $\nu':=\dot\gamma(l)\in T_yP^\perp$.\\
	In the case (a), since $\dim M>\dim P$, the index $q:=i(\mathfrak{L}_\nu)$ is strictly positive by (\ref{hp(a)}) and then there exists a linear subspace $V\subset H_xN$ of dimension $q$ on which $\mathfrak{L}_\nu$ is negative definite. Moreover, set $W:=\left<\xi_y\right>^\perp\cap T_yP$ and note that $s:=\dim W=\dim P-1$, since $P$ is tangent to $\xi$.\\
	Similarly, in the case (b), since $q,s>0$, where $q,s$ are as in \eqref{q,s}, there are two linear subspaces $V\subset H_xN$ and $W\subset H_yP$ of dimensions $q$ and $s$ respectively such that $\mathfrak{L}_\nu$ is negative definite on $V$ and $\mathfrak{L}_{\nu'}$ is positive semi-definite on $W$.\\
	Now, in both cases, let us denote by $V'\subset T_yM$ the image of $V$ under the parallel transport along $\gamma$: since $\dot\gamma(0)$ is normal to $N$ and $V\subset H_yN\subset\left<\xi_x\right>^\perp$, using Lemma \ref{Lemma:Etilde=fE}, we see that $V'\subset\left<\dot\gamma(l),\xi_y\right>^\perp$. Moreover, both in (a) and (b), we also have $W\subset\left<\dot\gamma(l),\xi_y\right>^\perp$. Therefore $$V'+W\subset\left<\dot\gamma(l),\xi_y\right>^\perp$$ and $$\dim(V'+W)\le\dim M-2.$$
	Now, by using either (\ref{hp(a)}) for the case (a) or (\ref{hp(b)}) for the case (b), we have: 
		$$\dim(V'+W)\ge q+s-\dim M+2\ge 1.$$
	
	Hence we can consider a non zero vector $e'\in V'\cap W$, which is the imagine of a vector $e\in V$ under parallel translation. In other words there exists a vector field $E\in\X(\gamma)$ which is parallel along $\gamma$ and such that $E(0)=e$, $E(l)=e'$. From Lemma \ref{Lemma:Etilde=fE} it follows that $\f E$ is parallel along $\gamma$ and $\f E(l)=\f e'\in T_yP$ by the invariance of $P$ in the case (a) and by the $\f$-invariance of $H_yP\subset T_yP$ in the case (b).\\
	Computing the index form $I$ of $\gamma$ (see for instance \cite{Sakai}) on the vector fields $E$ and $\f E$, we have:
	
	$$I_0^l(E,E)=-\int_{0}^{l}R(E,\dot\gamma,E,\dot\gamma)dt+g(\alpha(E,E),\dot\gamma)|_0^l;$$
	$$I_0^l(\f E,\f E)=-\int_{0}^{l} R(\f E,\dot\gamma,\f E,\dot\gamma)dt+ g(\alpha(\f E,\f E),\dot\gamma)|_0^l.$$ 
	Finally, by adding this last two expressions and by using Proposition \ref{scalar-Levi-form/second-fund.form}, we get:
	\begin{eqnarray}\label{I+I}
	&&I_0^l(E,E)+I_0^l(\f E,\f E)=\nonumber\\ &=&-\int_0^l[R(E,\dot\gamma,E,\dot\gamma)+R(\f E,\dot\gamma,\f E,\dot\gamma)]dt +\mathfrak{L}_\nu(e,e),
	\end{eqnarray} 
	in the case (a) by the identity (\ref{invariant subm./sec.f.f.}), and 
	\begin{eqnarray}\label{I+I1}
	&&I_0^l(E,E)+I_0^l(\f E,\f E)=\nonumber\\ &=&-\int_0^l[R(E,\dot\gamma,E,\dot\gamma)+R(\f E,\dot\gamma,\f E,\dot\gamma)]dt+ \nonumber\\&& +\mathfrak{L}_\nu(e,e)-\mathfrak{L}_{\nu'}(e',e'),
	\end{eqnarray} 
    in the case (b). Moreover, in view of Proposition \ref{Prop:curvature} and $e\in V$, we conclude that the expression (\ref{I+I}) is strictly negative. Similarly the expression \eqref{I+I1} is also strictly negative since $e'\in W$. In both cases this contradicts the length minimizing property of $\gamma$.
	\qed
\medskip

\begin{remark}
	In the same setting of Theorem \ref{main_theorem} (b), if $N$ is a compact generic submanifold whose characteristic Levi forms have all positive index, then $N$ intersects every closed, totally geodesic and generic hypersurface $P$. \\
	Indeed, since characteristic Levi forms are Hermitian and symmetric, $i(\mathfrak{L}_\nu)>0$ is equivalent to $i(\mathfrak{L}_\nu)\ge 2$ and hence $q\ge2$. Moreover, since $P$ is totally geodesic, the scalar Levi forms $\mathfrak{L}_{\nu'}$, $\nu'\in TP^\perp$, are all identically zero and hence $$s=\dim H_yP=\dim P-2=\dim M-3.$$ Therefore $q+s\ge\dim M-1$ and the claim is proved by applying the theorem.
\end{remark}

\medskip
Finally we present the proofs of the Corollary \ref{cor.1} and Corollary \ref{cor.2}.\\ 

\textbf{Proof of Corollary \ref{cor.1}.} Consider the fibration $\pi:M\to M/\xi$ and, by contradiction, assume that $M$ admits
a compact generic submanifold $N$,  whose characteristic Levi forms have all positive index. Then $\pi(N)$ is a compact set.
Since $M/\xi$ is biholomorphic to $S\times\C$, one can always find a Levi flat real hypersurface $P$ in it, such that $\pi(N)\cap P=\emptyset$ (to see this, it suffices to consider $S\times E$, where $E\subset\C$ is a real straight line disjoint from $p(\pi(N))$, where $p:S\times\C\to\C$ is the natural projection).\\
Then $N\cap\pi^{-1}(P)=\emptyset$, but this is in contrast with b) of Theorem \ref{main_theorem}, because $\pi^{-1}(P)$ is a generic hypersurface of $M$ whose characteristic Levi forms all vanish by \eqref{L=L'}.\qed

\medskip
\begin{remark}
This last corollary can be applied to the Sasakian space form $M(-3)=\R^{2n+1}$. In fact it is a complete, connected Sasakian manifold which has constant $\f$-bisectional curvature equal to zero, fibering onto the complex Euclidean space $\C^n$.
\end{remark}

\medskip
	\textbf{Proof of Corollary \ref{cor.2}.} Recall that, as a manifold, $\mathbb{S}^{2n+1}(c)$ is the
	unit sphere $\mathbb{S}^{2n+1}\subset\R^{2n+2}=\mathbb{C}^{n+1}$, where we adopt the following notation:
	$$(z_1,\dots,z_{n+1})=(x_1,\dots,x_{n+1},x_{n+2},\dots,x_{2n+2}),\quad z_k=x_k+ix_{n+1+k}.$$
	Moreover, the Sasakian structure on $\mathbb{S}^{2n+1}(c)$ is obtained by applying a $\mathcal{D}$-homothetic deformation to the canonical Sasakian structure of $\mathbb{S}^{2n+1}$ and this deformed structure is invariant under the action of the unitary group $U(n+1)$. As a consequence, unitary transformations on $\mathbb{S}^{2n+1}(c)$ map open hemispheres in open hemispheres and generic submanifolds into generic ones, preserving the index of all characteristic Levi forms. Therefore, given a generic submanifold $N\subset\mathbb{S}^{2n+1}(c)$ as in the statement, without loss of generality it suffices to prove that $N$ cannot be contained in the open  hemisphere $$S:=\{x\in\mathbb{S}^{2n+1}(c)|\ x_{2n+2}>0\}.$$ 
	Let $\pi:\mathbb{S}^{2n+1}(c)\to\mathbb{C}$P$_n$ be the canonical projection and let us consider the hyperplane $\sigma:z_{n+1}=0$ of $\mathbb{C}$P$_n$: since $\sigma$ is a holomorphic submanifold, $\pi^{-1}(\sigma)$ is an invariant submanifold of $\mathbb{S}^{2n+1}(c)$. 
	Furthermore, since the characteristic Levi forms are Hermitian and symmetric we have that $i(\mathfrak{L}_\nu)>0$ is equivalent to $$i(\mathfrak{L}_\nu)\ge2=\dim M-\dim\pi^{-1}(\sigma).$$
	Finally, since $c>-3$, $\mathbb{S}^{2n+1}(c)$ has nonnegative $\f$-bisectional curvature and by applying Theorem \ref{main_theorem} (a), we have that $N\cap \pi^{-1}(\sigma)\neq\emptyset$.  This means that there exists a point $P\in N$ with coordinates $P(z_1,\dots,z_n,0)$; in particular $P\notin S$.   
	\qed

Dario Di Pinto\\
Dipartimento di Matematica\\ Universit\`a degli Studi di Bari Aldo Moro,\\
Via E. Orabona 4, 70125 Bari, Italy.\\
\textit{email: d.dipinto10@studenti.uniba.it}\\

Antonio Lotta \\
Dipartimento di Matematica\\ Universit\`a degli Studi di Bari Aldo Moro,\\
Via E. Orabona 4, 70125 Bari, Italy.\\
\textit{email: antonio.lotta@uniba.it}\\

\begin{thebibliography}{}
	\bibitem{Ornea}
	Binh T.Q., Ornea L., Tam\'{a}ssy L.: {\textit{Intersections of Riemannian
			submanifolds. Variations on a theme by T.J. Frankel}}, Rend. Mat. Appl. (7) \textbf{19} (1999), 107-121.
	
	\bibitem{Blair}
	Blair, D.~E.: {Riemannian Geometry of Contact and Symplectic Manifolds}, Second Edition. Progress in Mathematics \textbf{203}, Birkh\"{a}user, Boston (2010).
	
	\bibitem{Falcitelli-Ianus-Pastore}
	Falcitelli, M., Ianus, S., Pastore, A.M.: {Riemannian submersions and related topics}, World Scientific Publishing Co., Singapore (2004)
	
	\bibitem{Frankel}
	Frankel, T.: \textit{Manifolds with positive curvature}, Pacific J. Math. \textbf{11} (1961), 165-171.
	
	\bibitem{Golgberg-Kobayashi}
	Goldberg, S.I., Kobayashi, S.: \textit{Holomorphic bisectional curvature}, J. Diff. Geom. \textbf{1} (1967), 225-233.
		
	\bibitem{Nacinovich-Medori}
	Medori, C., Nacinovich M.: {\textit{Levi-Tanaka algebras and homogeneous CR manifolds}}, Compositio Math. \textbf{109} (1997), 195-250.
	
	\bibitem{Pitis}
	Pitis, G.: \textit{On the topology of Saskian manifolds}, Math. Scand. \textbf{93} (2003), 99-108.
	
	\bibitem{Sakai}
	Sakai, T.: {Riemannian Geometry}, Translations of Mathematical Monographs \textbf{149}, Amer. Math. Soc., Providence (1996).
	
	\bibitem{Takahashi}
	Takahashi, T.: \textit{Sasakian $\phi$-symmetric spaces}, T\^{o}hoku Math. Journ. \textbf{29} (1977), 91-113.
	
	\bibitem{Tanno-Baik}
	Tanno, S., Baik, S-B.: \textit{$\phi$-holomorphic special bisectional curvature}, T\^{o}hoku Math. J. \textbf{22} (1970), 184-190.
	
	\bibitem{Yano-Kon2}
	Yano, K., Kon, M.: \textit{Generic submanifolds of Sasakian manifolds}, Kodai Math. J. \textbf{3}, (1980), 163-196.
	
	\bibitem{Yano-Kon}
	Yano, K., Kon, M.: {Structures on Manifolds}, Series in Pure Mathematics \textbf{3}, World Scientific Publishing Co., Singapore (1984).
\end{thebibliography}
\end{document}